\theoremstyle{plain}
\newtheorem*{theorem*}{Theorem}
\newtheorem{theorem}{Theorem}[section]
\newtheorem{proposition}[theorem]{Proposition}
\newtheorem{corollary}[theorem]{Corollary}
\newtheorem{lemma}[theorem]{Lemma}
\theoremstyle{definition}
\newtheorem{question}{\scshape{Question}}
\newtheorem{remark}[theorem]{\scshape{Remark}}
\newtheorem*{definition*}{\scshape{Definition}}
\newcommand\Z{{\mathbb{Z}}}
\def\cleardoublepage{\clearpage\if@twoside \ifodd\c@page\else
	\hbox{}
	\thispagestyle{empty}
	\newpage
	\if@twocolumn\hbox{}\newpage\fi\fi\fi}
\DeclareMathOperator{\Aut}{Aut}
\DeclareMathOperator{\St}{St}
\DeclareMathOperator{\st}{St}
\numberwithin{equation}{section}
\author[C.\,G. Cox]{Charles Garnet Cox} 
\address{Charles Garnet Cox: 
School of Mathematics, University of Bristol, Bristol BS8 1UG, United Kingdom}
\email{charles.cox@bristol.ac.uk}
\author[A. Thillaisundaram]{Anitha Thillaisundaram} 
\address{Anitha Thillaisundaram: 
Centre for Mathematical Sciences, Lund University,  223 62 Lund, Sweden}
\email{anitha.thillaisundaram@math.lu.se}
 \keywords{Invariable generation, maximal subgroups, groups acting on rooted trees,   branch groups, generating subgraph}
 \subjclass[2020]{Primary  20F05;  Secondary 20E08, 20E28, 20D15}
\begin{document}
	
	\title{Invariable generation of certain branch groups}

\begin{abstract}
Let $G$ be a group. Then $S\subseteq G$ is an invariable generating set of $G$ if every subset $S'$ obtained from $S$ by replacing each element with a conjugate is also a generating set of $G$.
We investigate invariable generation among key examples of branch groups. In particular, we prove that all generating sets of the torsion Grigorchuk groups, of the branch Grigorchuk-Gupta-Sidki groups and of the torsion multi-EGS 
 groups (which are natural generalisations of the Grigorchuk-Gupta-Sidki 
 groups) are invariable generating sets. Furthermore, for the first  Grigorchuk group and the torsion Grigorchuk-Gupta-Sidki groups, every finitely generated subgroup has a finite invariable generating set. Our results apply to finitely generated groups in $\mathcal{MN}$, the class of groups whose maximal subgroups are all normal. We then obtain that any $2$-generated group in $\mathcal{MN}$ is almost $\frac32$-generated, and end by applying this observation to generating graphs.
\end{abstract}

	\maketitle

\section{Introduction}
As introduced in 1988 by Dixon \cite{Dixon}, a group $G$ is \emph{invariably generated} (IG) if there exists a set $S\subseteq G$ such that for any choice of elements $a_s \in G$,  where $s \in S$, we have $\langle a_s^{-1}sa_s\;:\;s \in S\rangle=G$. We then say that $S$ is an \emph{IG-set}. This notion is significantly older, with Jordan in 1872 showing that the following, equivalent, property holds for every finite group $G$: for each transitive action of $G$ on a set $X$ where $|X|>1$, there exists $g\in G$ which is fixed-point-free on $X$. For Galois groups, where elements are considered up to conjugacy, the notion of IG-set is natural. As a generation property, it has several known unexpected properties. An introduction to invariable generation for infinite groups can be found in \cite{KLS}. We summarise some key results below. 

First, from \cite{KLS11}, every finite group $G$ has an IG-set of size at most $\log_2|G|$. Also, there are finitely generated groups which are not IG, e.g. non-abelian free groups of finite rank (see \cite{Wiegold1}). One can also distinguish between groups for which a finite IG-set exists (making them \emph{finitely invariably generated}, or FIG for short). Independently, \cites{ashot, exam2} showed that there are finitely generated groups that are IG but not FIG, a question posed in~\cite{Cox5}. Neither FIG nor IG are preserved across subgroups, shown strongly within \cite{Cox5} by  the fact that any finitely generated group $G$ results in $G\wr \Z$ being FIG.  From \cite{Wiegold1} the property of being IG is closed under extensions, meaning every virtually solvable group is IG (although, as asked in \cite{KLS}, it is unknown whether all finitely generated solvable groups are FIG). Both Wiegold~\cite{Wiegold1} and the authors in~\cite{KLS} asked whether IG is also stable under finite-index subgroups. Surprisingly, the answer is no as shown, again independently (for an index 2 subgroup), in both \cites{ashot} and \cite{exam2}. Below are some currently open questions relating to invariable generation.

\begin{question}  Is every finitely generated amenable group IG?
\end{question}
\begin{question} Under which conditions does invariable generation pass to finite-index subgroups?
\end{question}
\begin{question} Can one find more natural examples of finitely generated IG but not FIG groups?
\end{question}
Given our investigations, we also pose the following new question.
\begin{question}  Is every branch group (and even weakly branch group) IG? 
\end{question}

For $G$ a group, let $d(G)$ (respectively $d_I(G)$ if $G$ is FIG) denote the minimal cardinality of a set of (invariable) generators for $G$. It was shown in~\cite[Prop.~2.5]{KLS11} that $d_I(G)-d(G)$ can be arbitrarily large for finite groups. In~\cite[Thm.~A]{Cox6} it was shown that if $G$ is a finitely generated Houghton group, then $d(G)=d_I(G)$. 
\begin{question} Under which conditions do we have $d_I(G)=d(G)$ for a group $G$?
\end{question}

It is hoped that the class of branch and weakly branch groups may be suitably wild to provide examples for the questions above, since this class contains groups of intermediate word growth, infinite torsion groups, amenable but not elementary amenable groups, and just infinite groups. These classes of groups appeared from constructions of groups by Grigorchuk and by Gupta and Sidki in the 1980s, which then led to a generalised family of so-called Grigorchuk-Gupta-Sidki groups (GGS-groups for short), and also to the family of multi-EGS groups (where EGS stands for Extended Gupta Sidki).

One of the major aims in the subject of branch groups is to classify finitely generated branch groups according to the existence of a maximal subgroup of infinite index. Such a characterisation has been achieved for other families of groups, e.g., the classical theorem of Margulis and Soifer, which states that a finitely generated linear group has maximal subgroups only of finite index if and only if it is virtually solvable. The first results for branch groups in this direction were  by Pervova~\cites{Pervova3, Pervova4}, who proved that the  torsion Grigorchuk groups and the torsion GGS-groups do not contain maximal subgroups of infinite index. Pervova's result was generalised to the torsion multi-EGS groups by Klopsch and Thillaisundaram~\cite{KT}. Francoeur and Thillaisundaram~\cite{FT} have further shown that the non-torsion GGS-groups do not have maximal subgroups of infinite index. The first explicit examples of finitely generated branch groups with maximal subgroups of infinite index were  provided by Francoeur and Garrido~\cite{FG}. Their examples are the non-torsion \v{S}uni\'{c} groups  acting on the binary rooted tree. 
 
In this note we use known results for maximal subgroups of branch groups to obtain results for invariable generation. In particular, it was proven in \cite[Lem.~2.6]{KLS} that for  a finitely generated group $G$, if all maximal subgroups of $G$ have finite index, then $G$ is IG. Furthermore,   if there exists an integer $c$ such that every maximal subgroup $M$ of $G$ satisfies $|G:M|\le c$, then $G$ is FIG. This means that knowledge about maximal subgroups can give some information about the properties IG and FIG. (Observe, however, that the previous results are not necessary conditions. Obviously $\Z$ is FIG rather than just IG, but has maximal subgroups of unbounded index. Also, from~\cite{Cox6}, the Houghton groups $H_2$, $H_3, \ldots$ are all FIG, but all have maximal subgroups of infinite index; here, the stabiliser of a single point $x$ gives such a maximal subgroup.)  This note considers groups that are one of the following:
\begin{itemize}
\item[(i)] the first Grigorchuk group;
\item[(ii)] the torsion Grigorchuk groups;
\item[(iii)] torsion GGS-groups
\item[(iv)] branch GGS-groups;
\item[(v)] torsion multi-EGS groups; and
\item[(vi)] non-torsion \v{S}uni\'{c} groups acting on the binary rooted tree.
\end{itemize}
Note that the first Grigorchuk group is also in the family (ii) above. It was mentioned in the introduction of \cite{KLS} that the standard generating set for the first Grigorchuk group  is also an IG-set. A stronger result holds, as seen in (1) below. Result (1) uses the work in \cites{Pervova3, Pervova4, FT, KT}.
\begin{theorem}\label{thm:mainlist}
If $G$ is in
\begin{itemize}
\item[\textup{(1)}]\label{cor:any-gen-set} families (i)--(v), then any generating set of $G$ is an IG-set.
\item[\textup{(2)}]\label{thm:fg-subgroups} families (i) and (iii), then every finitely generated subgroup of $G$ is FIG.
\item[\textup{(3)}] family (v) (which includes family (iii)), then every finite-index subgroup of $G$ is FIG.
\item[\textup{(4)}]\label{thm:Sunic} family (vi), then $G$ is FIG and has finite generating sets that are not IG-sets. 
\end{itemize}
\end{theorem}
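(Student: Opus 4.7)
The engine behind all four parts is the following mild strengthening of \cite[Lem.~2.6]{KLS}: \emph{if $G$ is a finitely generated group lying in $\mathcal{MN}$, then every generating set of $G$ is an IG-set.} The proof is a one-line contradiction: since $G$ is finitely generated, every proper subgroup is contained in a maximal subgroup, so if $S$ generates $G$ but $T=\{s^{a_s}:s\in S\}$ does not, then $\langle T\rangle$ lies inside a maximal (hence normal) subgroup $M$, whence $s=(s^{a_s})^{a_s^{-1}}\in M$ for every $s\in S$, contradicting $\langle S\rangle=G$. For part~(1) it then suffices to verify that each group in families~(i)--(v) lies in $\mathcal{MN}$: the works \cites{Pervova3,Pervova4,KT,FT} show that every maximal subgroup of such a group is normal of index equal to the prime $p$ governing the tree, whence $G\in\mathcal{MN}$.

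For parts~(2) and~(3) I would promote the observation above from IG to FIG by producing, in each case, a uniform bound on the indices of the maximal subgroups of the relevant subgroup. For~(2), given a finitely generated subgroup $H$ of the first Grigorchuk group or of a torsion GGS-group, I would adapt Pervova's length-reduction technique (and its refinements for finitely generated subgroups of branch groups) to show that $H$ itself lies in $\mathcal{MN}$ with uniformly bounded maximal-subgroup indices; the second half of \cite[Lem.~2.6]{KLS} then delivers FIG. For~(3), given a finite-index subgroup $H$ of a multi-EGS group $G$, I would combine the index-$p$ conclusion of \cite{KT} with an analysis of how the maximal subgroups of $H$ project into a controlled finite quotient of $G$ (for example $G/N$ where $N\le H$ is the normal core of $H$), aiming to bound $[H:M]$ in terms of $[G:H]$ alone; the same half of \cite[Lem.~2.6]{KLS} then yields FIG.

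Part~(4) is where the main obstacle sits, since \cite{FG} exhibits maximal subgroups of infinite index in the non-torsion \v{S}uni\'{c} groups on the binary tree, so neither the $\mathcal{MN}$ hypothesis nor the bounded-index hypothesis is available. To establish FIG, I would directly exhibit a finite IG-set by leveraging the structural analysis of these groups (and, if convenient, Wiegold's \cite{Wiegold1} closure of IG under extensions applied to a suitable decomposition of $G$). To produce a finite generating set that is \emph{not} an IG-set, I would take a maximal subgroup $M\le G$ of infinite index from the Francoeur--Garrido classification, select generators $g_1,\ldots,g_k$ of $G$ each lying in some conjugate of $M$, and choose conjugators $a_i$ so that $g_i^{a_i}\in M$ for all $i$; then $\langle g_i^{a_i}:1\le i\le k\rangle\subseteq M\ne G$. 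Identifying such generators and conjugators explicitly, and verifying the FIG half of~(4) in the presence of infinite-index maximal subgroups, is where I expect the bulk of the technical work to lie.
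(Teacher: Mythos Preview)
Your treatment of part~(1) is correct and matches the paper's Proposition~\ref{prop:normal} exactly.

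For parts~(2) and~(3), your proposed arguments are vaguer than necessary. The paper does not adapt Pervova's length-reduction to arbitrary finitely generated subgroups; instead it invokes the \emph{subgroup induction property} established for the first Grigorchuk group and the torsion GGS-groups in \cite{GW} and \cite{FL}, together with \cite[Thm.~B]{FL}, which says directly that any finitely generated subgroup has only finitely many maximal subgroups, all of finite index. Similarly for~(3), the paper simply cites \cite[Cor.~5.7]{KT}, which already gives the bounded-index conclusion for finite-index subgroups of torsion multi-EGS groups. Your projection-to-a-finite-quotient idea is plausible but is more work than needed; the required bounds are already in the literature.

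Part~(4) is where there is a genuine gap. You correctly identify that this is the hard case, but you do not have a concrete mechanism for FIG. The paper's key idea is this: by \cite[Thm.~8.1]{FG}, every infinite-index maximal subgroup is conjugate to some $H(q)=\langle (ab)^q,b_1,\ldots,b_m\rangle$, so the directed generators $b_1,\ldots,b_m$ already lie in every such $H(q)$. The decisive step (Lemma~\ref{lem:conjugate-of-ab}) is that \emph{no conjugate of $ab$ lies in any $H(q)$}; this is proved using the vertex-restriction lemmas \cite[Lem.~8.12, 8.15]{FG}. Since the finite-index maximal subgroups are all normal, it follows that $\{ab,b_1,\ldots,b_m\}$ is an IG-set. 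Conversely, an explicit computation shows $a^{(ba)^{(q-1)/2}}=(ab)^qb\in H(q)$, so the standard generating set $\{a,b_1,\ldots,b_m\}$ is not an IG-set. Your proposed route via Wiegold's extension closure would give only IG, not FIG, and your plan for the non-IG-set does not exploit the fact that $b_1,\ldots,b_m$ are already in $H(q)$, which is what makes the explicit counterexample immediate.
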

\begin{remark} We make some comments on the above results:
\begin{itemize}
\item As far as the authors are aware, the behaviour of (1) above has only been documented for  finite nilpotent groups; see \cite[Prop.~2.4]{KLS11}.
\item For (2) we use the subgroup structure (specifically the subgroup induction property) of the first Grigorchuk group and of a torsion GGS-group. This mirrors the situation for nilpotent groups; cf. Proposition~\ref{prop:anynilp}. 
\item That (3) provides an answer to Question 2 for  torsion multi-EGS groups, which are all branch. Previously~\cite{Cox6} had shown that all finite-index subgroups of the Houghton groups $H_n$, for $n\ge 2$, are FIG.
\item In (4) the final comment is immediate from these groups having maximal subgroups of infinite index, but that like the Houghton groups they are still FIG.
\item For any group $G$ in (i)-(vi), we get that $d(G)=d_I(G)$.
\end{itemize}
\end{remark} 
Next, a group $G$ is said to be \emph{$\frac32$-generated} if for every $g\in G\backslash\{1\}$ there is an $h\in G$ such that $\langle g, h\rangle=G$. This has been studied for over 50 years. We can also ask similar questions for general 2-generated groups by introducing the notion of a group~$G$ being \textit{almost} $\frac32$-generated, which means that if $g\in G$ is such that $gG'\in G/G'$ is part of a generating pair in $G/G'$, then $g$ is part of a generating pair in $G$. This idea was introduced in \cite{Golan}, where it was proved that Thompson's group~$F$ is almost $\frac32$-generated. It relates to an older, much stronger, condition: if $G$ is a group and $S\subseteq G$, then $S$ \emph{weakly generates} $G$ if the image of $S$ in $G/G'$ is a generating set for $G/G'$. Clearly if $S$ generates $G$, then $S$ weakly generates $G$. For the next result, recall that $\mathcal{MN}$ denotes the class of groups for which every maximal subgroup is normal. 
\begin{proposition}\label{prop:weaklygen} Let $G$ be such  that every proper subgroup lies in a  maximal subgroup. Then the conditions `every $S\subseteq G$ that weakly generates $G$ also generates $G$' and `$G\in\mathcal{MN}$' are equivalent.
\end{proposition}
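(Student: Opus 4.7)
The plan is to prove the two implications separately, in both cases exploiting the correspondence between the subgroups of $G$ containing $G'$ and the subgroups of the abelian quotient $G/G'$: every overgroup of $G'$ in $G$ is automatically normal, and the quotient by any such overgroup is abelian.

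For the direction $G\in\mathcal{MN}\Rightarrow$ (every weakly generating set generates), suppose $S\subseteq G$ weakly generates $G$ and assume towards a contradiction that $\langle S\rangle\neq G$. By the standing hypothesis, there is a maximal subgroup $M$ containing $\langle S\rangle$, and $M$ is normal because $G\in\mathcal{MN}$. The maximality of $M$ together with the correspondence theorem forces $G/M$ to have no proper nontrivial subgroups; a short elementary argument (every non-identity element of $G/M$ generates it, and cyclic groups of infinite or composite order admit proper nontrivial subgroups) then shows that $G/M$ is cyclic of prime order, and in particular abelian. Therefore $G'\leq M$, so the image of $\langle S\rangle$ in $G/G'$ is contained in $M/G'\subsetneq G/G'$, contradicting the assumption that $S$ weakly generates $G$.

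For the converse, suppose $G\notin\mathcal{MN}$ and choose a maximal subgroup $M$ that fails to be normal in $G$. Since every overgroup of $G'$ is normal in $G$, the non-normality of $M$ forces $G'\not\leq M$. As $G'$ is normal in $G$, the product $MG'$ is itself a subgroup, and it strictly contains $M$; by maximality we conclude $MG'=G$. Setting $S=M$ now produces a subset of $G$ with $\langle S\rangle=M\neq G$ whose image in $G/G'$ equals $MG'/G'=G/G'$, so $S$ weakly generates $G$ without generating it.

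The heart of the argument is the forward direction: the key observation is that a maximal subgroup which is additionally normal has cyclic quotient of prime order, which is precisely what lets one upgrade "weakly generating" to "generating" in this class of groups. The hypothesis that every proper subgroup lies in a maximal subgroup is used solely to locate $M$ in the forward direction, while in the converse one simply takes the non-normal maximal subgroup supplied by $G\notin\mathcal{MN}$.
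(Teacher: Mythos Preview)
Your proof is correct. The forward direction is essentially the paper's argument: the paper invokes an external result (\cite[Thm.~A]{Myropolska}) to obtain $G'\le\Phi(G)$ and then derives the contradiction, whereas you prove directly that a normal maximal subgroup has abelian (indeed prime cyclic) quotient and hence contains $G'$. These are the same idea, with your version being self-contained.

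The converse, however, follows a genuinely different route. The paper argues via invariable generation: it appeals to Proposition~\ref{prop:normal} to produce a generating set $S$ that is not an IG-set, and then observes that the failing set of conjugates $\{s^{g_s}\}$ has the same image as $S$ in $G/G'$ (since $s^{g_s}=s[s,g_s]$), hence weakly generates without generating. Your argument bypasses this entirely: you take $S=M$ for a non-normal maximal subgroup $M$, note that $G'\not\le M$ forces $MG'=G$, and conclude that $M$ itself weakly generates without generating. This is shorter and more direct, and as you correctly point out it does not use the standing hypothesis that every proper subgroup lies in a maximal one. The paper's detour through IG-sets, on the other hand, ties the result more explicitly into the surrounding narrative on invariable generation.
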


\begin{corollary}\label{cor:3/2}
Let $G\in\mathcal{MN}$ with $d(G)=2$. Then $G$ is almost $\frac32$-generated.
\end{corollary}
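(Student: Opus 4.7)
The plan is to deduce this almost immediately from Proposition~\ref{prop:weaklygen}. The statement of that proposition has two hypotheses: membership in $\mathcal{MN}$ (which we are given) and that every proper subgroup of $G$ lies in a maximal subgroup. So the first step will be to verify that the latter hypothesis is automatic under $d(G)=2$.

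Since $d(G)=2$, $G$ is finitely generated, and in any finitely generated group every proper subgroup is contained in a maximal subgroup. To see this, I would apply Zorn's lemma to the poset of proper subgroups containing a given proper subgroup $H$, ordered by inclusion. Any chain in this poset has union that cannot contain a finite generating set for $G$ (else one chain member would already contain it), so the union is again a proper subgroup, giving an upper bound.

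With both hypotheses of Proposition~\ref{prop:weaklygen} satisfied, every subset of $G$ that weakly generates $G$ in fact generates $G$. To finish, let $g\in G$ be such that $gG'$ is part of a generating pair in $G/G'$, say $\langle gG',hG'\rangle=G/G'$ for some $h\in G$. Then by definition $\{g,h\}$ weakly generates $G$, so by the implication just cited, $\langle g,h\rangle=G$. Thus $g$ is part of a generating pair of $G$, proving that $G$ is almost $\tfrac32$-generated.

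I do not anticipate a real obstacle: the entire content is packaged into Proposition~\ref{prop:weaklygen}, and the only thing to check is the mild Zorn's lemma observation that finite generation forces every proper subgroup into a maximal one. The argument is therefore short; the only care needed is to be explicit that the witness $h$ to $gG'$ being in a generating pair of $G/G'$ can be lifted to an element of $G$, which is immediate since the natural map $G\to G/G'$ is surjective.
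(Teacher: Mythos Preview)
Your proof is correct and matches the paper's intended approach: the paper does not give a separate proof of this corollary, treating it as an immediate consequence of Proposition~\ref{prop:weaklygen} (equivalently Lemma~\ref{lem:1966}), and you have simply spelled out that deduction together with the standard Zorn's lemma argument that finitely generated groups satisfy the maximal-subgroup hypothesis.
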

\smallskip

\noindent\textit{Organisation}. In Section~\ref{sec:maximal} we make our key observation concerning when every generating set is an IG-set. We then apply this result to prove Theorem~\ref{thm:mainlist} in Section~\ref{sec:apps}.  Finally in Section~\ref{sec:3/2} we prove Proposition~\ref{prop:weaklygen} and apply this observation to understand the generating graph of 2-generated groups in $\mathcal{MN}$.

\medskip

\noindent\textbf{Acknowledgements.} We thank Dominik Francoeur, Benjamin Klopsch and Jeremy Rickard for very helpful conversations. We also thank the referees for suggesting improvements to the paper. This research was supported by a London Mathematical Society Research in Pairs (Scheme 4) grant.
 The first author would also like to thank the welcoming staff at Lund University during his two-week stay in Sweden. 

\section{Initial observations}\label{sec:maximal}
We begin with an elementary observation, which will provide strong properties for invariable generation for many branch groups.
\begin{proposition}\label{prop:normal}
Let $G$ be a  group where every proper subgroup lies in a maximal subgroup. Then $G\in\mathcal{MN}$ if and only if every generating set is an IG-set. 

Assume that $H$ is finitely generated. Then $H\in \mathcal{MN}$ if and only if every finite generating set of $H$ is an IG-set.
\end{proposition}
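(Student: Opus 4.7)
My plan is to prove the equivalence in both directions and then refine the argument for finitely generated groups.

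\textbf{Forward direction.} Suppose $G\in\mathcal{MN}$, let $S$ be any generating set of $G$, and choose conjugators $\{a_s\}_{s\in S}$. If the conjugated set $T=\{a_s^{-1}sa_s:s\in S\}$ failed to generate $G$, then $\langle T\rangle$ would be a proper subgroup, hence by hypothesis contained in some maximal subgroup $M$, which is normal by the assumption $G\in\mathcal{MN}$. Normality of $M$ gives $s=a_s(a_s^{-1}sa_s)a_s^{-1}\in a_sMa_s^{-1}=M$ for every $s\in S$, so $S\subseteq M$, contradicting $\langle S\rangle=G$.

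\textbf{Converse.} I would argue contrapositively: assume some maximal subgroup $M\leq G$ is not normal and exhibit a generating set that is not an IG-set. Non-normality gives $m\in M$ and $g\in G$ with $x:=g^{-1}mg\notin M$; maximality of $M$ then ensures $S:=M\cup\{x\}$ generates $G$. Choosing $a_s=1$ for $s\in M$ and $a_x=g^{-1}$ sends the entire conjugated set into $M$, so $S$ is not an IG-set.

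\textbf{Finitely generated refinement.} The forward direction carries over verbatim, so the only genuinely new point is producing a \emph{finite} non-IG generating set when $H\notin\mathcal{MN}$; this is the only step I expect to require any thought, since the set $M\cup\{x\}$ above is typically infinite. The key observation is that, because $M$ is maximal and non-normal, its normal closure $M^H$ is a normal subgroup strictly containing $M$, so by maximality $M^H=H$. Hence $H$ is generated by $\bigcup_{h\in H} h^{-1}Mh$, and by finite generation of $H$ one can extract a finite generating subset $\{x_1,\ldots,x_k\}$ with each $x_j=a_j^{-1}m_ja_j$ for some $m_j\in M$ and $a_j\in H$. Conjugating $x_j$ by $a_j^{-1}$ then returns it to $m_j\in M$, so this finite generating set is simultaneously conjugated into the proper subgroup $M$ and therefore fails to be an IG-set.
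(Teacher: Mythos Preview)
Your forward direction and the converse for the first statement are essentially identical to the paper's. The genuine difference is in the finitely generated refinement: the paper keeps the set $M\cup\{x\}$ (with $x\notin M$ but $x^{h}\in M$), extracts a finite generating subset $T$, and observes that $x$ must lie in $T$, so conjugating $x$ back into $M$ while fixing the rest shows $T$ is not an IG-set. You instead pass to the normal closure $M^{H}=H$ and extract a finite generating set from $\bigcup_{h\in H} h^{-1}Mh$, so that \emph{every} element of your finite set is already a conjugate of something in $M$ and can be simultaneously conjugated back. Both routes rely on the same underlying fact (any generating set of a finitely generated group contains a finite generating subset), but your argument is a bit more symmetric and avoids the paper's case split on whether $M$ is finitely generated; the paper's version, on the other hand, is closer in spirit to the general converse and makes the role of the single ``bad'' element $x$ explicit.
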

\begin{proof}
For the forward direction of the first statement, let $S$ be a generating set for $G$. If $\langle s_i^{\,g_i}\mid s_i\in S,\,g_i\in G\rangle\ne G$, then $\langle s_i^{\,g_i}\mid s_i\in S,\,g_i\in G\rangle\le M$ for some maximal subgroup $M$. As all maximal subgroups are normal, it follows that  $\langle S\rangle\le M$, a contradiction. 

For the reverse direction, suppose $M$ is a non-normal maximal subgroup. Then there exists an $x\in M$ such that $x^g\notin M$ for some $g\in G$. Hence $\langle M,x^g\rangle=G$, and so the set $\{x^g\}\cup M$ is a generating set but not an IG-set.

The forward direction of the final statement follows immediately from the first statement. We now assume that $H\not\in \mathcal{MN}$, and will show that $H$ has a finite generating set that is not an IG-set. Let $H$ have a non-normal maximal subgroup $M$. If $M$ is finitely generated, then the previous paragraph produces a finite generating set that is not an IG-set. So assume that $M$ is not finitely generated. Let $S$ be a generating set for~$M$. We have $\langle S\rangle =M$ and $\langle S\cup \{x\}\rangle =H$ for some $x\in H$ where $x^h\in M$ for some $h\in H$ (making $S\cup \{x\}$ a generating set but not an IG-set). However, $H$ is finitely generated and so any infinite generating set of $H$ contains a finite generating set. Hence $S\cup\{x\}$ contains a finite subset~$T$ where $\langle T\rangle=H$. Also $x\in T$ (since otherwise $\langle T\rangle\le\langle S\rangle= M$) and so $T$ is also not an IG-set.
\end{proof}

Note that the first hypothesis in this proposition is satisfied by any finitely generated group. Such a hypothesis is necessary; see Remark \ref{example:nonIGsets}. We note that Myropolska proved in \cite[Prop.~2.2]{Myropolska} that for a finitely generated group, all maximal subgroups are normal if and only if every normal generating set is a generating set. Hence, if $G$ is a finitely generated group with a non-normal maximal subgroup, then there exists some normal generating set that is not a generating set. The existence of such a normal generating set also then implies the existence of a generating set that is not an IG-set. We observe furthermore that since all maximal subgroups of infinite index are not normal, the existence of maximal subgroups of infinite index also indicates that there is a generating set that is not an IG-set. We see this behaviour for the non-torsion \v{S}uni\'{c} groups acting on a binary rooted tree, but this observation applies widely and provides a new viewpoint on finding subsets that are not IG-sets.

Recall that for a nilpotent group, all  maximal subgroups are normal; see \cite[Thm.~12.1.5]{Robinson}. Therefore, Proposition \ref{prop:normal} immediately shows that finitely generated nilpotent groups have every generating set an IG-set.  
This was recorded for finite nilpotent groups as part of a stronger result; see \cite[Prop.~2.4]{KLS11}.
The following, which was pointed out to us by B. Klopsch, shows that the finitely generated assumption can be dropped.
\begin{proposition}\label{prop:anynilp} Let $G$ be a nilpotent group. Then every generating set of $G$ is an IG-set.
\end{proposition}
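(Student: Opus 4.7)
The plan is to bypass Proposition~\ref{prop:normal} altogether, since an arbitrary nilpotent group need not have the property that every proper subgroup lies in a maximal subgroup (consider Pr\"ufer groups), and instead give a direct commutator-theoretic argument. The heart of the matter is the following classical lemma which I would state and prove first: if $G$ is nilpotent and $H\le G$ with $HG'=G$, then $H=G$.

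To prove this lemma I would proceed by induction on the nilpotency class $c$ of $G$. The case $c=1$ is trivial, as $G'=1$. For $c\ge 2$, pass to the quotient $\bar G=G/\gamma_c(G)$, which has class $c-1$ and satisfies $\bar H\bar G'=\bar G$. By induction $\bar H=\bar G$, i.e.\ $H\gamma_c(G)=G$. Now exploit the fact that $\gamma_c(G)\subseteq Z(G)$: any generator $[g_1,g_2]$ of $G'$ can be written as $[h_1z_1,h_2z_2]$ with $h_i\in H$ and $z_i\in\gamma_c(G)\subseteq Z(G)$, and a direct computation using centrality gives $[h_1z_1,h_2z_2]=[h_1,h_2]\in H$. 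Therefore $G'\subseteq H$, and combined with $HG'=G$ this yields $H=G$.

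With the lemma in hand, the proposition follows quickly. Let $S$ generate $G$ and let $T=\{s^{g_s}\mid s\in S\}$ be any conjugate variant. For each $s\in S$ we have $s^{g_s}=s\cdot[s,g_s]$ with $[s,g_s]\in G'$, so $s^{g_s}\equiv s\pmod{G'}$. Hence the images of $T$ and $S$ in $G/G'$ coincide, and since $S$ generates $G$, the set $T$ generates $G$ modulo $G'$, i.e.\ $\langle T\rangle G'=G$. The lemma, applied to $H=\langle T\rangle$, now forces $\langle T\rangle=G$, so $T$ is a generating set as required.

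The main obstacle, if any, is the temptation to invoke Proposition~\ref{prop:normal} directly and then realise it does not apply in the absence of maximal subgroups; the resolution is the direct argument above, whose only genuine ingredient is the centrality of $\gamma_c(G)$, which is exactly what allows commutators to be ``cleaned up'' modulo $H$. Everything else is a routine commutator identity and an induction on class.
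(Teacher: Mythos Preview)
Your proof is correct and is essentially the same argument as the paper's: both induct on the nilpotency class, reduce to $H\gamma_c(G)=G$ for the relevant subgroup $H$, and then use centrality of $\gamma_c(G)$ to show that the needed commutators already lie in $H$. The only difference is organisational---you isolate the classical lemma ``$HG'=G\Rightarrow H=G$ for nilpotent $G$'' and then observe that conjugates agree modulo $G'$, whereas the paper inducts directly on the IG statement and shows $\gamma_c(G)\le H$ rather than all of $G'\le H$; the underlying commutator cleanup $[h_1z_1,h_2z_2]=[h_1,h_2]$ is identical.
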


\begin{proof} We proceed by induction on $c$, the nilpotency class of $G$. If $c=1$, then the result is clear. So suppose the result is true for groups of  class at most $c-1$, and assume that $G$ is of  class~$c$. Let $G$ be generated by a set $X=\{x_i\mid i\in I\}$, for some indexing set $I$. For each $i\in I$, let $g_i\in G$. By the induction hypothesis,  the set $\{x_i[x_i,g_i]\gamma_c(G)\mid i\in I
\}$ generates $G/\gamma_c(G)$. Equivalently, for every $g\in G\backslash \gamma_c(G)$, there is an element $h_g\in \langle x_i[x_i,g_i]\mid i\in I\rangle$ and $z_g\in \gamma_c(G)$ such that $h_g=gz_g$. Further, since $\gamma_c(G)$ is generated by the elements $[g,x]$, where $g\in \gamma_{c-1}(G)\backslash \gamma_c(G)$ and $x\in G\backslash \gamma_2(G)$, we can find $h_g,h_x\in \langle x_i[x_i,g_i]\mid i\in I\rangle$ such that $h_g=gz_g$ and $h_x=xz_x$, where $z_g,z_x\in \gamma_c(G)$. Then $[h_g,h_x]=[gz_g,xz_x]=[g,x]$, since  $\gamma_c(G)\le Z(G)$.
\end{proof}

\begin{remark}\label{example:nonIGsets} The above does not generalise to solvable groups. Take, for example, $C_2\ltimes(\mathbb{Q}\times \mathbb{Q})$ where the $C_2=\langle t\rangle$ permutes the two copies of $\mathbb{Q}$. This has one maximal subgroup, $M=\mathbb{Q}\times \mathbb{Q}$ which is normal, but not every generating set is an IG-set. Indeed, $\{t\}\cup\{(q, q') \in \mathbb{Q}^2\}$ is an IG-set whereas $\{t(q, q') : q, q'\in \mathbb{Q}\}$ has conjugates which all lie in $H=\langle t\rangle\ltimes\{(q, q) : q \in \mathbb{Q}\}$.
\end{remark}

Further, we record  the following additional tool for identifying IG groups, which is of independent interest. 

\begin{lemma}
Let $G$ be a group and suppose that $S\subseteq G$ is a generating set for $G$. Write $\widetilde{S}$ for the subset obtained from $S$ where each element  of $S$ is replaced by any conjugate of it. Then  $S$ is an IG-set if and only if $\langle \widetilde{S}\rangle\ge G'$ for all choices of $\widetilde{S}$.
\end{lemma}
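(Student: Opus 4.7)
The plan is to exploit the fact that conjugation becomes trivial modulo the derived subgroup, so that any conjugated set $\widetilde{S}$ has exactly the same image in $G/G'$ as $S$ does.

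The forward direction is immediate: if $S$ is an IG-set, then by definition $\langle \widetilde{S}\rangle = G$ for every choice of $\widetilde{S}$, and in particular $\langle \widetilde{S}\rangle \ge G'$.

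For the reverse direction, I would fix an arbitrary $\widetilde{S}$ and argue that $\langle\widetilde{S}\rangle = G$. The first step is to observe that for each $s \in S$ and each $g \in G$, the conjugate $s^g$ satisfies $s^g G' = s G'$, because $G/G'$ is abelian. Consequently the image of $\widetilde{S}$ in the quotient $G/G'$ coincides with the image of $S$, and since $S$ generates $G$, this image generates $G/G'$. Combined with the standing hypothesis $\langle\widetilde{S}\rangle \ge G'$, this yields $\langle\widetilde{S}\rangle G' / G' = G/G'$ together with $G' \le \langle\widetilde{S}\rangle$, from which $\langle\widetilde{S}\rangle = G$ follows by a routine correspondence argument.

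There is essentially no obstacle here; the only content is the observation that conjugation is invisible in the abelianisation, which reduces the IG property to the single containment condition $\langle\widetilde{S}\rangle \ge G'$. I would keep the write-up to a short paragraph per direction.
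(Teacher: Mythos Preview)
Your proof is correct and is essentially the same as the paper's. The paper phrases the reverse direction via the commutator identity $s^{g_s}=s[s,g_s]$ (so that once $G'\le\langle\widetilde{S}\rangle$ each $[s,g_s]$ lies in $\langle\widetilde{S}\rangle$ and hence so does each $s$), while you pass to the abelianisation and use correspondence; these are the same observation in two equivalent packagings.
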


\begin{proof}
The forward direction is clear, since if $\langle S\rangle=G$, then $\langle \widetilde{S}\rangle=G\ge G'$ for all choices of~$\widetilde{S}$. For the reverse direction, since   $\langle\widetilde{S}\rangle=\langle s^{g_s}\mid s\in S \rangle=\langle s[s,g_s]\mid s\in S \rangle$ for some fixed choice of $\{g_s\mid s \in S\}\subseteq G$, if $\langle\widetilde{S}\rangle\ge G'$, then it follows that  $\langle\widetilde{S}\rangle=\langle s\mid s\in S\rangle=G$. Since $\widetilde{S}$ was arbitrary, we have that $S$ is an IG-set, as required.
\end{proof}

\section{Proving Theorem \ref{thm:mainlist}}\label{sec:apps}
We now show that results for invariable generation apply to the families (i)--(v) introduced before Theorem \ref{thm:mainlist}.
\begin{proof}[Proof of Theorem~\ref{thm:mainlist}(1-3)]
We start with (1). The groups within categories (i)--(v) are in $\mathcal{MN}$, and hence we can apply Proposition \ref{prop:normal}.

For both (2) and (3), we show that \cite[Lem. 2.6(ii)]{KLS} applies (which says that if all maximal subgroups of $G$ are of bounded index, then $G$ is FIG). For (2), let $G$ be either the first Grigorchuk group or any torsion GGS-group. By~\cite[Thm.~1]{GW} and \cite[Thm. C]{FL}, the group $G$ has the subgroup induction property. By \cite[Thm.~B]{FL}, a finitely generated subgroup of $G$ has only maximal subgroups of finite index, and also only finitely many such maximal subgroups. We now show (3). Let $G$ be a torsion multi-EGS group. By~\cite[Cor.~5.7]{KT}, every finite-index  subgroup of $G$ has only finitely  many  maximal subgroups, all of finite index.
\end{proof}

Our final aim is to show Theorem~\ref{thm:mainlist}(4). 
First we recall some notation. For more information on groups acting on rooted trees, and in particular on branch groups, see~\cite{BarthGrigSunik}.

Let $T$ denote the binary rooted tree. We write $\psi$ for the natural embedding 
\begin{align*}
\psi \colon \st_{\Aut T}(1) &\longrightarrow 
\Aut  T \times 
\Aut T\\
g&\longmapsto (g_1,g_2)
\end{align*}
where $\St_{\Aut T}(1)$ is the set of automorphisms of $T$ which fix the first-level vertices. Here we also follow the natural ordering of subtrees rooted at the first-level vertices, so that the restriction of $g$ to the left subtree is $g_1$ and correspondingly $g_2$ acts on the right subtree.

For $m\ge 2$ let $f(x)=x^m+a_{m-1}x^{m-1}+\cdots +a_1x+a_0$ be an invertible polynomial over $\mathbb{F}_2$. The \v{S}uni\'{c} group $G_{2,f}$ is generated by the rooted automorphism $a$ corresponding to the
$2$-cycle $(1 \, 2 )$, and by the $m$ directed generators $b_1,\ldots,b_{m}\in \st_{\Aut T}(1)$ defined as follows:
\begin{align*}
    \psi(b_1)&=(1,b_2)\\
     \psi(b_2)&=(1,b_3)\\
     &\,\,\,\vdots\\
      \psi(b_{m-1})&=(1,b_{m})\\
       \psi(b_{m})&=(a,b_1^{-a_0}b_2^{-a_1}\cdots b_{m}^{-a_{m-1}}).
\end{align*}

We know from \cite[Thm.~1.1]{FG} that the non-torsion \v{S}uni\'{c} groups $G_{2,f}$ have maximal subgroups of infinite index, and from \cite[Cor.~2.13]{FG} these groups $G_{2,f}$ contain an element $b\in\langle b_1,\ldots,b_{m}\rangle$ such that $\psi(b)=(a,b)$.

For an odd prime $q$, note that $\{(ab)^q,b_1,\ldots,b_m\}$ is a generating set for a \v{S}uni\'{c} group~$G_{2,f}$. We recall the following.

\begin{lemma}\cite[Thm.~8.1]{FG} Let $G$ be a non-torsion \v{S}uni\'{c} group acting on the binary rooted tree. Then every maximal subgroup of infinite index is conjugate to $H(q):=\langle (ab)^q,b_1,\ldots,b_m\rangle$ for some odd prime $q$.
\end{lemma}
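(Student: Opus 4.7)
The statement is imported directly from \cite[Thm.~8.1]{FG}, so within the present paper the proof is a single-line citation. If one were to reprove it from scratch, the plan would be to establish two directions separately: (a) each $H(q)$ is a maximal subgroup of infinite index, and (b) every maximal subgroup of infinite index is conjugate to some $H(q)$. The core ingredients are the self-similar element $b$ with $\psi(b)=(a,b)$ supplied by \cite[Cor.~2.13]{FG}, the infinite order of $ab$ that comes from the non-torsion hypothesis on $f$, and the branch structure of $G_{2,f}$.

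For (a), one uses the non-torsion hypothesis to see that $ab$ has infinite order, so $\langle (ab)^q \rangle$ has index $q$ in $\langle ab \rangle$; combined with the inclusion $\langle b_1,\ldots,b_m\rangle \subseteq H(q)$ this forces $H(q)$ to have infinite index in $G_{2,f}$. Maximality then follows by observing that if $x \notin H(q)$ then, modulo $H(q)$, one can write $x$ as $(ab)^k$ with $\gcd(k,q)=1$, whence $\langle (ab)^k, (ab)^q \rangle = \langle ab \rangle$ recovers $a$ and therefore all of $G_{2,f}$ once combined with the directed generators.

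For (b), given a maximal subgroup $M$ of infinite index, the strategy is first to show that some conjugate $M^g$ contains all of $b_1,\ldots,b_m$; then $M^g \langle ab \rangle = G_{2,f}$ by maximality, so $M^g \cap \langle ab \rangle$ has prime index in $\langle ab \rangle$, say $q$, and one rules out $q=2$ by comparing the action of $ab$ and $a$ on the first level of the tree. The main obstacle is precisely the conjugation step: forcing all directed generators into $M$ simultaneously up to a single global conjugation. In \cite{FG} this draws on a detailed analysis of the rigid stabilisers, the abelianisation of $G_{2,f}$, and the branch-theoretic structure of the action on the boundary of $T$. That argument is substantial enough that a direct citation is preferable to a reproduction here.
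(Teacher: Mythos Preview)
Your proposal is correct: the paper gives no proof of this lemma and simply imports it from \cite[Thm.~8.1]{FG}, exactly as you state in your opening sentence. The sketch you append for a from-scratch argument goes beyond anything the paper attempts, and as you yourself conclude, the bare citation is the intended proof here.
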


\begin{lemma}
    \label{lem:conjugate-of-ab}
    Let $G$ be a non-torsion \v{S}uni\'{c} group acting on the binary rooted tree. Then no conjugate of $ab$ is in $H(q)$ for any odd prime~$q$. 
\end{lemma}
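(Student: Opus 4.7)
The plan is to argue by contradiction: suppose $(ab)^g\in H(q)$ for some $g\in G$ and some odd prime $q$, and derive a contradiction.

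As a warm-up I would first dispose of the case $g=1$. Since $b\in\langle b_1,\ldots,b_m\rangle\subseteq H(q)$, the assumption $ab\in H(q)$ would yield $a=(ab)b^{-1}\in H(q)$, and hence $H(q)\supseteq\langle a,b_1,\ldots,b_m\rangle=G$, contradicting properness of $H(q)$. So at least $ab\notin H(q)$ itself.

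For general $g$ I would reformulate the problem via the primitive action of $G$ on the coset space $G/H(q)$: we have $(ab)^g\in H(q)$ if and only if $ab$ fixes the coset $gH(q)$, so the claim is equivalent to saying that $ab$ acts fixed-point-freely on $G/H(q)$. The most obvious tool, the abelianization, unfortunately fails here: since $\bar a$ has order $2$ in $G^{\ab}$ and $q$ is odd we have $\overline{(ab)^q}=\overline{ab}$, so $\overline{H(q)}\ni\overline{ab}$ and no abelian invariant can separate the two. The obstruction must therefore be detected by a finer, non-abelian mechanism.

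The route I would pursue is to open up the description of $H(q)$ from \cite{FG}: this maximal subgroup arises from the dynamics of $ab$, whose square is self-similar via $\psi((ab)^2)=(ba,ab)$, with $q$ entering as the period of a specific orbit on which $(ab)^q$ acts trivially while $ab$ acts as a $q$-cycle. Concretely, I would try to exhibit an $H(q)$-invariant subset $\Omega_q\subseteq G/H(q)$ containing the identity coset on which $ab$ permutes as a $q$-cycle; since conjugation simply transports the orbit $\Omega_q$, this would force every conjugate of $ab$ to be fixed-point-free on its translate and in particular to avoid $H(q)$. The main obstacle I expect is precisely this last step: extracting the correct $\Omega_q$ and verifying the $q$-cyclic action likely requires unwinding the self-similar recursion for $(ab)^{2q}$ level by level, while tracking how conjugation by an arbitrary $g\in G$ permutes the orbit data. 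A tempting alternative would be to construct a finite quotient $Q$ of $G$ (for instance a dihedral-type extension of $C_q$) in which the image of $H(q)$ is visibly disjoint from every conjugate of the image of $ab$; but any such $Q$ must reconcile $a^2=1$ with $\phi(ab)$ having order $q$, which seems to run straight back into the same order-$2$ obstruction that sank the abelianization attempt, so I would expect the coset-action approach grounded in \cite{FG} to be the one that actually works.
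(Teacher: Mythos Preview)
Your warm-up argument that $ab\notin H(q)$ is correct and is in fact the endpoint of the paper's proof as well. But the remainder of your proposal is a plan rather than a proof: you identify two possible routes (an explicit $q$-cycle $\Omega_q$ in $G/H(q)$, or a finite quotient separating $ab$ from $H(q)$), acknowledge that neither is carried out, and correctly note that the second runs into the same obstruction that killed the abelianisation. So as it stands there is a genuine gap: the reduction from an arbitrary conjugate $(ab)^g$ to the case $g=1$ is never made.

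The paper closes this gap by a much shorter self-similarity argument that you do not consider. Rather than analysing the coset action, one observes that if $(ab)^g\in H(q)$ then $ab$ lies in the conjugate $M:=H(q)^{g^{-1}}$, which is again a maximal subgroup of infinite index. Now the recursion $\psi(b)=(a,b)$ forces $ab$ to lie in the restriction $M_u$ for \emph{every} vertex $u$ (this is \cite[Lem.~8.12]{FG}). On the other hand, \cite[Lem.~8.15]{FG} guarantees that for some vertex $v$ the restriction $M_v$ is exactly $H(q)$ itself. Combining these gives $ab\in H(q)$, and your warm-up then yields the contradiction. The point you missed is that the self-similar structure of $ab$ together with the classification in \cite{FG} lets you transport the problem back to the standard copy of $H(q)$, so no direct analysis of the action on $G/H(q)$ is needed.
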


\begin{proof}
Suppose for a contradiction that a conjugate of $ab$ is in $H(q)$ for some odd prime~$q$. Equivalently, suppose there is a maximal subgroup $M$ of $G$ of infinite index, with $ab\in M$. For a vertex $u$, recall that $\text{st}_M(u)$ denotes the subgroup consisting of elements in~$M$ that fix~$u$,   and for convenience, we write $M_u$ for the restriction of $\text{st}_M(u)$ to the subtree rooted at~$u$. Note that $ab\in M_u$ for every vertex $u$; compare \cite[Lem.~8.12]{FG}. Then by \cite[Lem.~8.15]{FG}, there is a vertex $v$ of the tree such that  $M_v=H(q)$. It follows that $ab\in H(q)$, which gives the desired  contradiction.
\end{proof}

\begin{proposition}\label{prop:Sunic2}
Let $G=\langle a,b_1,\ldots,b_m\rangle$, for some $m\ge 2$, be a non-torsion \v{S}uni\'{c} group acting on the binary rooted tree. Then $\{a,b_1,\ldots,b_m\}$ is not an IG-set, but $\{ab,b_1,\ldots,b_m\}$ is an IG-set. Hence $G$ is FIG.
\end{proposition}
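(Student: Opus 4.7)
The plan is to establish three things in order: that $\{a, b_1, \ldots, b_m\}$ fails to be an IG-set; that $\{ab, b_1, \ldots, b_m\}$ is an IG-set; and then that $G$ is FIG, which will be immediate from the second.

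For the first statement, my approach is to exhibit conjugates of $a, b_1, \ldots, b_m$ all lying inside the proper maximal subgroup $H(q)$. Since $b_1, \ldots, b_m \in H(q)$ by definition, I would take $g_i = 1$ for $i \geq 1$; it then suffices to find $g_0 \in G$ with $a^{g_0} \in H(q)$. The key observation is that $H(q)$ contains $(ab)^q$, which has non-trivial first-level action, so $H(q) \not\subseteq \st_G(1)$ and one expects $H(q)$ to meet the $G$-conjugacy class of the rooted involution $a$. I would construct such a $g_0$ explicitly using the self-similar description of $H(q)$ from \cite[Lem.~8.15]{FG}, which identifies a vertex $v$ with $M_v = H(q)$; descending through this self-similar picture should produce the required conjugate.

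For the second statement, I would argue by contradiction. Suppose the conjugates $(ab)^{g_0}, b_1^{g_1}, \ldots, b_m^{g_m}$ generate a proper subgroup of $G$; then they all lie in some maximal subgroup $M$. By \cite[Thm.~8.1]{FG} (quoted before Lemma~\ref{lem:conjugate-of-ab}), either $M = H(q)^c$ for some odd prime $q$ and some $c \in G$, or $M$ has finite index in $G$. In the infinite-index case, $((ab)^{g_0})^{c^{-1}}$ is a conjugate of $ab$ lying in $H(q)$, contradicting Lemma~\ref{lem:conjugate-of-ab}. In the finite-index case, I would invoke the congruence subgroup property for non-torsion \v{S}uni\'c groups to get $M \supseteq \st_G(n)$ for some $n$; since $G/\st_G(n)$ embeds in $\Aut(T_n)$ it is a finite nilpotent $2$-group, so Proposition~\ref{prop:anynilp} shows every generating set of this quotient is an IG-set. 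Applied to the image of $\{ab, b_1, \ldots, b_m\}$ (which generates $G/\st_G(n)$ since $a = (ab) \cdot b^{-1}$ with $b \in \langle b_1, \ldots, b_m\rangle$), this forces the images of the conjugates to generate $G/\st_G(n)$, contradicting their containment in the proper subgroup $M/\st_G(n)$.

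The main obstacle is the first statement: producing $g_0$ so that $a^{g_0} \in H(q)$ is delicate, since a priori the conjugacy class of the rooted involution $a$ could avoid $H(q)$, and one must navigate the self-similar structure of $G$ carefully to locate the conjugator. A secondary technical issue is invoking the congruence subgroup property for the non-torsion \v{S}uni\'c groups in the second statement; once both statements are settled, the fact that $G$ is FIG is immediate, witnessed by the finite IG-set $\{ab, b_1, \ldots, b_m\}$.
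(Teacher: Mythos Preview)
Your overall architecture matches the paper's, and your handling of the infinite-index case in the second claim is exactly right. But there is a genuine gap in the first claim: you correctly reduce to finding $g_0$ with $a^{g_0}\in H(q)$, yet you never produce it, deferring instead to a vague appeal to the self-similar description in \cite[Lem.~8.15]{FG}. The paper dispatches this with an explicit one-line conjugator: take $g_0=(ba)^{(q-1)/2}$. Since $a$ and $b$ are involutions, one has $(ba)^{-k}=(ab)^k$ and $a(ba)^k=(ab)^ka$, so
\[
a^{(ba)^k}=(ab)^k\,a\,(ba)^k=(ab)^{2k}a,
\]
and with $k=(q-1)/2$ this equals $(ab)^{q-1}a=(ab)^q(ab)^{-1}a=(ab)^q\,ba\cdot a=(ab)^qb\in H(q)$. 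No descent along the tree is needed; the obstacle you flagged dissolves into a two-line computation in the infinite dihedral group $\langle a,b\rangle$.

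For the finite-index case of the second claim, your route through the congruence subgroup property and Proposition~\ref{prop:anynilp} is heavier than necessary, and you would need to check that CSP is actually established for the non-torsion \v{S}uni\'c groups on the binary tree before invoking it. The paper argues more directly: every maximal subgroup of \emph{finite} index in $G$ is normal (and contains $G'$), so if the conjugates $(ab)^{g_0},b_1^{g_1},\ldots,b_m^{g_m}$ all lay in such an $M$, then by normality so would $ab,b_1,\ldots,b_m$ themselves, contradicting $\langle ab,b_1,\ldots,b_m\rangle=G$. Your approach would work once CSP is in hand (indeed, CSP plus the fact that $G/\St_G(n)$ is a finite $2$-group recovers exactly the normality statement the paper uses), but it is a detour.
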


\begin{proof}
For the first claim, note that $a^{(ba)^{\frac{q-1}{2}}}=(ab)^{q-1}a=(ab)^qb\in H(q)=\langle (ab)^q,b_1,\ldots,b_m\rangle$. Hence $\{a,b_1,\ldots,b_m\}$ is not an IG-set. Now, by Lemma~\ref{lem:conjugate-of-ab} no conjugate of $ab$ is in $H(q)$ for any odd prime~$q$. Therefore it follows that no conjugate of $ab$ is in any maximal subgroup of infinite index. It is straightforward that  $\{(ab)^{g_0},b_1^{g_1},\ldots,b_m^{g_m}\}$, for any $g_0,\ldots,g_m\in G,$ is not in any maximal subgroup of finite index, since these maximal subgroups are all normal and contain $G'$. Therefore $\langle (ab)^{g_0},b_1^{g_1},\ldots,b_m^{g_m}\rangle=G$.
\end{proof}

\section{Weakly generating sets and generating graphs} \label{sec:3/2}

Let $G$ be a group and $S\subseteq G$. Recall that $S$ \emph{weakly generates} $G$ if the image of $S$ in $G/G'$ is a generating set for $G/G'$. We prove Proposition~\ref{prop:weaklygen} via Lemma~\ref{lem:1966} and Lemma~\ref{lem:abgenonlyif} below. We thank the anonymous referee for the improvement to the following argument.

\begin{lemma} \label{lem:1966} Let $G\in\mathcal{MN}$ satisfy that every proper subgroup lies in a maximal subgroup and let $S\subseteq G$. If $S$ weakly generates $G$, then $S$ generates $G$.
\end{lemma}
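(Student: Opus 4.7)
The plan is to argue by contradiction. Suppose $S$ weakly generates $G$ but $\langle S\rangle\ne G$. Then $\langle S\rangle$ is a proper subgroup of $G$, so by the standing hypothesis on $G$ there exists a maximal subgroup $M\le G$ with $\langle S\rangle\le M$.

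Next I would exploit the assumption $G\in\mathcal{MN}$ to get $M\trianglelefteq G$, and then pin down the structure of $G/M$. Since $M$ is a maximal subgroup of $G$, the correspondence theorem tells us that $G/M$ has no proper nontrivial subgroups; hence $G/M$ is cyclic of prime order. In particular $G/M$ is abelian, so $G'\le M$.

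Putting these facts together, $\langle S\rangle G'\le M\lneq G$, so the image of $S$ in $G/G'$ lies in the proper subgroup $M/G'$ and therefore does not generate $G/G'$. This contradicts the hypothesis that $S$ weakly generates $G$, so we must have $\langle S\rangle=G$.

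The argument is short and there is no real obstacle; the only substantive point is the observation that a normal maximal subgroup has quotient of prime order (so in particular abelian), which is precisely what forces $G'\le M$ and links the normality assumption $\mathcal{MN}$ to the weak-generation hypothesis. Everything else is a standard contrapositive argument, and the requirement that every proper subgroup lie in a maximal subgroup is used exactly once, at the first step, to produce the containing maximal subgroup $M$.
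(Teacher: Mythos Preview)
Your proof is correct and follows essentially the same contradiction argument as the paper: place $\langle S\rangle$ in a maximal subgroup $M$, show $G'\le M$, and contradict $\langle S\rangle G'=G$. The only difference is that the paper obtains $G'\le M$ by citing Myropolska's result that $G'\le\Phi(G)$ for groups in $\mathcal{MN}$, whereas you derive $G'\le M$ directly from the elementary observation that a normal maximal subgroup has prime index and hence abelian quotient; your version is thus slightly more self-contained.
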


\begin{proof}
By \cite[Thm.~A]{Myropolska}, we have that  $G'\le \Phi(G)$. 
Suppose that $S$ weakly generates $G$. Thus $\langle S\rangle G'=G$. Assume that $\langle S\rangle\ne G$. Then $\langle S\rangle$ is contained within some maximal subgroup $M$. But $G'\le \Phi(G)$ and so $G'\le M$, which contradicts that $\langle S\rangle G'=G$. Hence $\langle S\rangle=G$.
\end{proof}
For convenience, we also note an immediate consequence of the above.
\begin{corollary}\label{cor:abgen}
Let $G\in\mathcal{MN}$ be finitely generated. Then $d(G)=d(G/G')$.
\end{corollary}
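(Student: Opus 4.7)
The plan is to establish both inequalities $d(G)\le d(G/G')$ and $d(G)\ge d(G/G')$, where the reverse inequality follows directly from Lemma~\ref{lem:1966}.

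First I would dispose of the easy direction: $d(G)\ge d(G/G')$ holds for any group, since if $S=\{s_1,\dots,s_n\}$ generates $G$ then the images $\{s_1G',\dots,s_nG'\}$ generate $G/G'$; hence $d(G/G')\le d(G)$.

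For the reverse direction, let $n=d(G/G')$ and pick elements $s_1G',\dots,s_nG'$ generating $G/G'$, together with lifts $s_1,\dots,s_n\in G$. By construction the set $S=\{s_1,\dots,s_n\}$ weakly generates $G$. I now want to apply Lemma~\ref{lem:1966}, which requires the hypothesis that every proper subgroup of $G$ lies in a maximal subgroup. This is where the finite generation hypothesis on $G$ is used: a standard Zorn's lemma argument shows that in a finitely generated group, every proper subgroup is contained in a maximal subgroup (any chain of proper subgroups has a proper union, since a finite generating set of $G$ cannot be contained in any member of the chain). With this in place, Lemma~\ref{lem:1966} gives $\langle S\rangle=G$, and therefore $d(G)\le n=d(G/G')$.

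Combining the two inequalities yields $d(G)=d(G/G')$. There is no serious obstacle here; the entire content is packaged inside Lemma~\ref{lem:1966}, and the corollary is essentially a restatement in terms of minimal generating cardinalities, together with the observation that finite generation supplies the missing ambient hypothesis on the existence of maximal overgroups.
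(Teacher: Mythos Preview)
Your proof is correct and is exactly the immediate consequence the paper has in mind: the inequality $d(G)\ge d(G/G')$ is trivial, and Lemma~\ref{lem:1966} (together with the standard Zorn's lemma fact that finitely generated groups have maximal subgroups over every proper subgroup) gives the other inequality by lifting a minimal generating set of $G/G'$.
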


\begin{lemma}\label{lem:abgenonlyif} Let $G$ satisfy that every proper subgroup lies in a maximal subgroup and suppose that $G\not\in \mathcal{MN}$. Then there is a set $T$ that weakly generates $G$ but but does not generate $G$.
\end{lemma}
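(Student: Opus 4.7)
The plan is to exploit a non-normal maximal subgroup directly, taking $T$ to be (the underlying set of) that subgroup. Since $G\not\in\mathcal{MN}$, we may choose a maximal subgroup $M\le G$ that is not normal in $G$; this is all we will need from the hypotheses.

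The first step is to observe that $G'\not\le M$. Indeed, if $G'\le M$ then $M/G'$ would be a subgroup of the abelian group $G/G'$ and hence normal in $G/G'$, which would force $M$ to be normal in $G$, contradicting the choice of $M$. Combining $G'\not\le M$ with the maximality of $M$, the subgroup $MG'$ properly contains $M$ and therefore $MG'=G$.

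I would then define $T:=M$ (viewed as a subset of $G$) and verify the two required properties. Since $\langle T\rangle=M\ne G$, the set $T$ does not generate $G$. On the other hand, the image of $T$ in $G/G'$ is $MG'/G'=G/G'$, so $T$ weakly generates $G$, as required.

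I do not anticipate any real obstacle: the argument is a two-line observation once one notes that a maximal subgroup containing $G'$ is automatically normal. The assumption that every proper subgroup lies in a maximal subgroup is actually not used in this direction (it is only needed for the converse, Lemma~\ref{lem:1966}); it appears in the statement purely for symmetry with the other half of Proposition~\ref{prop:weaklygen}. One might wonder whether $T$ can be taken finite, but the lemma only asks for existence, so taking $T=M$ suffices.
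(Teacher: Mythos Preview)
Your argument is correct. It is, however, genuinely different from the paper's proof. The paper argues contrapositively through the IG framework: starting from any generating set $S$, any preimage $S'$ of $\pi_{\mathrm{ab}}(S)$ differs from $S$ by elements of $G'$, and since every commutator $[s,g]$ is such an element, the assumption ``every weakly generating set generates'' forces every generating set to be an IG-set; Proposition~\ref{prop:normal} (its reverse direction) then yields $G\in\mathcal{MN}$.

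Your route is more direct and entirely self-contained: you simply take $T=M$ for a non-normal maximal subgroup $M$, observe $G'\not\le M$ (else $M\trianglelefteq G$), and conclude $MG'=G$ by maximality. This avoids any appeal to invariable generation or to Proposition~\ref{prop:normal}, at the cost of losing the conceptual link between weak generation and IG-sets that the paper is building towards. Your observation that the hypothesis ``every proper subgroup lies in a maximal subgroup'' is unused is correct for your proof, and in fact also for the paper's, since only the reverse implication of Proposition~\ref{prop:normal} is invoked there.
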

\begin{proof} Take a generating set $S$ of $G$. Let $\pi_{\textrm{ab}}$ denote the abelianisation map from $G$ to $G/G'$. Therefore $X=\pi_{\textrm{ab}}(S)$ generates $G/G'$. Now, the choice $S'$ of a preimage of $X$ with respect to $\pi_{\textrm{ab}}$ corresponds to replacing each $s\in S$ with $sg'$ where $g'\in G'$. For any $g\in G$, we have that $s^{-1}g^{-1}sg$ is a possibility for $g'$. Hence if every choice of $S'$ generates $G$, then $S$ must be an IG-set. Proposition \ref{prop:normal} yields the result.
\end{proof}

Lemma \ref{lem:1966} can also be interpreted in terms of the \emph{generating graph}. Given a $2$-generated group $G$, we can define the generating graph $\Gamma(G)$ with vertex set $G\backslash \{1\}$ and an edge between $x$ and $y$ if and only if $\langle x, y\rangle=G$.  The generating graph has been well studied, especially for finite groups. Clearly $\frac32$-generation relates to `how connected' the generating graph is. It was proved in \cite{J} that for a finite group~$G$, either $\Gamma(G)$ has an isolated vertex or $\Gamma(G)$ is connected with diameter at most~2. At the time of writing, it is open whether there is a $2$-generated infinite group which is $\frac32$-generated but $\Gamma(G)$ has diameter greater than 2. It is also open whether a $\frac32$-generated group necessarily has $\Gamma(G)$ connected. For an arbitrary $2$-generated group~$G$ it is natural to consider the subgraph $\Delta(G)$ resulting from removing the isolated vertices from $\Gamma(G)$. 
The structure of $\Delta(G)$  even for finite groups is less straightforward since, for instance, there is no upper bound on its possible diameter; cf. \cite{CL}. 
Given $G$ in $\mathcal{MN}$ with $d(G)=2$, Lemma \ref{lem:1966} states that vertices in $\Delta(G)$ are connected if and only if their images in $\Delta(G/G')$ are connected. We note that \cite[Thm.~1.1]{AcciarriLucchini} describes all such generating graphs. Specifically, that $\Delta(\Z^2)$ is connected with infinite diameter and for any other abelian group $A$ with $d(A)=2$ we have that $\Delta(A)$ has diameter at most 2.   Therefore to study the diameter of the generating graphs of groups in $\mathcal{MN}$, it is sufficient to study the generating graphs of the possible abelianisations. Note that, by Corollary \ref{cor:abgen}, the abelianisation of our group~$G$ cannot be cyclic. We therefore have the following. 
\begin{theorem}\label{thm:diameters}
Let $G\in\mathcal{MN}$ with $d(G)=2$.
\begin{enumerate}
 \item If $G/G'\cong\Z^2$, then $\Delta(G)$ is connected but has infinite diameter.
 \item If $G/G'\cong C_2\times C_2$, then $\Delta(G)$ has diameter 1.
\item Otherwise $d(G/G')=2$ and $\Delta(G)$ has diameter 2. 
\end{enumerate}
\end{theorem}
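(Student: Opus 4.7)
The plan is to transport the problem to the abelianisation via the quotient map $\pi\colon G\to G/G'$. By Lemma~\ref{lem:1966}, a pair $\{x,y\}\subseteq G$ generates $G$ if and only if $\{\pi(x),\pi(y)\}$ generates $G/G'$, so (for distinct $x,y$) $\{x,y\}$ is an edge of $\Delta(G)$ precisely when $\pi(x)\ne\pi(y)$ and $\{\pi(x),\pi(y)\}$ is an edge of $\Delta(G/G')$. In particular, a vertex $x\in G\setminus\{1\}$ is non-isolated in $\Gamma(G)$ exactly when $x\notin G'$ and $\pi(x)$ is non-isolated in $\Gamma(G/G')$. Corollary~\ref{cor:abgen} yields $d(G/G')=d(G)=2$, so $G/G'$ is a $2$-generated abelian group, and \cite[Thm.~1.1]{AcciarriLucchini} applies.

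The next step I would carry out is to show that $\pi$ is essentially distance-preserving on $\Delta(G)$. Given any edge $\pi(x)\sim \pi(y)$ in $\Delta(G/G')$, every pair of lifts is adjacent in $\Delta(G)$ by the above, so any path $\pi(x)=v_0\sim\cdots\sim v_k=\pi(y)$ in $\Delta(G/G')$ lifts (with prescribed endpoints) to a path of the same length in $\Delta(G)$. Conversely, every edge of $\Delta(G)$ has endpoints with distinct $\pi$-images, so any path in $\Delta(G)$ projects to a walk of the same length in $\Delta(G/G')$. Hence for non-isolated $x,y$ with $\pi(x)\ne\pi(y)$ one obtains the equality $d_{\Delta(G)}(x,y)=d_{\Delta(G/G')}(\pi(x),\pi(y))$; and for $x\ne y$ in the same fibre, $x$ and $y$ are non-adjacent but joined by a length-$2$ path through any common neighbour, which exists because $\pi(x)$ is non-isolated in $\Delta(G/G')$.

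With this dictionary each part of the theorem follows from \cite[Thm.~1.1]{AcciarriLucchini}. For (i), $\Delta(\Z^2)$ is the classical graph on primitive vectors joined by pairs forming a $\Z$-basis; it is connected with infinite diameter, and the dictionary transfers both properties to $\Delta(G)$. For (ii), any two of the three non-identity elements of $C_2\times C_2$ generate, so $\Delta(C_2\times C_2)\cong K_3$ has diameter~$1$, which transfers via $\pi$. For (iii), the cited result gives $\Delta(G/G')$ diameter exactly~$2$, which is inherited by $\Delta(G)$. I expect the main obstacle to be the bookkeeping in the second paragraph, namely pinning down the characterisation of isolated vertices and handling the same-fibre case cleanly, after which each item of the theorem is a direct corollary of the abelian picture in \cite{AcciarriLucchini}.
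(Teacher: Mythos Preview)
Your approach is exactly the paper's: the paper gives no formal proof but states the theorem as a consequence of the preceding paragraph, which reduces to the abelianisation via Lemma~\ref{lem:1966} and appeals to \cite[Thm.~1.1]{AcciarriLucchini}. Your distance dictionary makes that reduction precise, and parts~(i) and~(iii) go through as you describe.

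There is, however, a genuine inconsistency in your treatment of case~(ii). Your second paragraph correctly establishes that two distinct non-isolated vertices $x\ne y$ with $\pi(x)=\pi(y)$ are non-adjacent but at distance exactly~$2$ in $\Delta(G)$. Yet you then assert that diameter~$1$ ``transfers via~$\pi$'' in case~(ii), which contradicts your own dictionary whenever $G'\ne 1$. Concretely, take $G=Q_8$: it is nilpotent hence in $\mathcal{MN}$, with $d(G)=2$ and $G/G'\cong C_2\times C_2$, but $i$ and $-i$ lie in the same fibre and are not adjacent (they generate a cyclic subgroup), so $\Delta(Q_8)$ has diameter~$2$. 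In fact your dictionary yields $\mathrm{diam}\,\Delta(G)=\max\bigl(\mathrm{diam}\,\Delta(G/G'),\,2\bigr)$ whenever $|G'|>1$, so the diameter-$1$ conclusion in~(ii) holds only for $G\cong C_2\times C_2$ itself. This is a defect in the theorem as stated rather than in your method, but your write-up should flag the discrepancy rather than gloss over it.
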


We recall  that the \emph{total domination number} $\gamma_t(\Gamma)$ of a  graph $\Gamma$ is the least size of a set~$S$ of vertices of $\Gamma$  such that
every vertex of $\Gamma$  is adjacent to a vertex in $S$. Such a set~$S$ of least size is called a \emph{total dominating set} 
for  $\Gamma$. These concepts have been recently introduced for finite graphs, and  the total domination number of $\Delta(G)$ has been studied for $G$ a finite simple group or a finite 2-generated nilpotent group; see for example \cite{BH1} and \cite{nilpgengraph} respectively. As for infinite groups~$G$, not much is known about the total domination number and total dominating sets for~$\Delta(G)$, apart for the  Tarski monsters, which have a total dominating set of size 2, and   certain finite-index subgroups of the second Houghton group~\cite[Prop.~5.5]{Cox7}, which  have no finite total dominating sets. We make the following addition to the above list.

\begin{corollary}\label{cor:domset}
Let $G\in\mathcal{MN}$ with $d(G)=2$ and $G/G'\cong C_p\times C_p$ for some prime $p$. Then $\gamma_t(\Delta(G))=2$. Moreover, any lift of any generating set for $G/G'$ is a total dominating set for $\Delta(G)$.
\end{corollary}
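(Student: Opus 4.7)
The plan is to reduce everything to the abelianisation $A:=G/G'$, which by hypothesis is the elementary abelian group $C_p\times C_p$. The crucial tool is Lemma~\ref{lem:1966}: since $G\in\mathcal{MN}$, a subset of $G$ generates $G$ if and only if its image in $A$ generates $A$. This turns every question about generating pairs in $G$ into a question of linear algebra in the two-dimensional $\mathbb{F}_p$-vector space $A$.

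First I would identify the vertex set of $\Delta(G)$. A non-identity $g\in G$ is a vertex of $\Delta(G)$ iff it lies in some generating pair $\{g,h\}$ of $G$, and by Lemma~\ref{lem:1966} this holds iff $\{gG',hG'\}$ generates $A$; since every non-zero vector in $A$ extends to a basis, this is equivalent to $g\notin G'$. Thus $V(\Delta(G))=G\setminus G'$, and in particular $\Delta(G)$ has at least two vertices (any generating pair of $G$ provides two of them).

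Now let $\{a,b\}$ be a lift of any generating set of $A$; by Lemma~\ref{lem:1966} it generates $G$, so $a,b\in V(\Delta(G))$. For an arbitrary $g\in G\setminus G'$ I would show that $\{g,a\}$ or $\{g,b\}$ generates $G$, equivalently (by Lemma~\ref{lem:1966} again) that the non-zero vector $gG'$ fails to lie in at least one of the one-dimensional subspaces $\langle aG'\rangle$, $\langle bG'\rangle$ of $A$. These subspaces are distinct, as otherwise their sum would be cyclic rather than all of $A$, and each has prime order $p$, so they intersect trivially. Hence $gG'$ cannot lie in both, and the required edge in $\Delta(G)$ exists. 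This exhibits $\{a,b\}$ as a total dominating set of size $2$, and simultaneously shows that every lift of every generating set of $A$ works.

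Finally, the matching lower bound $\gamma_t(\Delta(G))\geq 2$ is automatic: $\Delta(G)$ is loopless since $G$ is not cyclic (because $A$ is not), so no vertex is adjacent to itself and a singleton cannot be totally dominating. Combining the two inequalities gives $\gamma_t(\Delta(G))=2$. I do not foresee a genuine obstacle here; the whole argument is a short transfer through Lemma~\ref{lem:1966} together with the elementary observation that two distinct lines in the plane $\mathbb{F}_p^2$ meet only at the origin.
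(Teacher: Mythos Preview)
Your argument is correct and is exactly the approach the paper intends: the corollary is stated without proof because it follows directly from Lemma~\ref{lem:1966} (applicable since $d(G)=2$ makes $G$ finitely generated) together with the observation, made just before Theorem~\ref{thm:diameters}, that adjacency in $\Delta(G)$ is detected in $\Delta(G/G')$. Your explicit linear-algebra verification in $\mathbb{F}_p^2$ and the looplessness argument for the lower bound are precisely the details one must fill in.
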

\begin{remark} An example of a group $G$ satisfying the hypotheses in Corollary \ref{cor:domset} is any branch GGS-group.
\end{remark}

\end{document}